\pdfoutput=1
\documentclass[11pt]{article}

\usepackage[a4paper,margin=1in]{geometry}
\usepackage[T1]{fontenc}
\usepackage{lmodern}
\usepackage{microtype}

\usepackage{amsmath,amssymb,amsfonts,amsthm}
\usepackage{mathtools}
\usepackage{graphicx}
\usepackage{booktabs}
\usepackage{enumitem}
\usepackage[hidelinks]{hyperref}
\usepackage{url}

\newtheorem{definition}{Definition}
\newtheorem{assumption}{Assumption}
\newtheorem{lemma}{Lemma}
\newtheorem{theorem}{Theorem}
\newtheorem{proposition}{Proposition}

\newcommand{\R}{\mathbb{R}}
\newcommand{\Rpos}{\mathbb{R}_{\ge 0}}

\title{Multi-Path Routing in AMM Exchange Networks:\\
  Graph-Theoretic Formulation, Convex Allocation, and Empirical Evaluation}
\author{Ilia Zhavoronkov\\
  \small Planet 9 Group Corporation}
\date{April 2026}

\begin{document}

\maketitle

\begin{abstract}
We present a graph-theoretic and convex optimization framework for multi-path
routing in decentralized exchange networks, together with its implementation
and empirical evaluation on Ethereum mainnet.
The framework models the market as a directed token multigraph whose arcs
carry AMM exchange functions.
Routing is decomposed into two implemented layers: candidate path generation
via gas-aware marginal $k$-shortest-path enumeration---where edge scores
$w_a = -\log\pi_a + \gamma g_a$ embed expected execution cost directly into
graph traversal---with an explicit pool-simple constraint tracked during path
construction, and continuous flow allocation over the selected candidates
solved as a concave maximization over a simplex with a per-pool price-impact
cap.
Under standard concavity and monotonicity assumptions, the KKT conditions
imply marginal-output equalization across active paths.
The central technical contribution is an \emph{improving-path certificate}:
after solving the allocation on $k = 20$ candidate paths, the KKT multiplier
$\lambda^*$ is used as a threshold to determine---via a single shortest-path
query---whether any omitted pool-simple path could improve the current
solution; in our implementation the certificate confirms sufficiency in the
majority of epochs.
Execution is protected by an on-chain slippage tolerance enforced at the
smart-contract level.
We evaluate the implemented engine against four production DEX aggregators on
repeated WETH--USDT quote observations across six trade sizes on Ethereum
mainnet: median shortfall is below 5~bps across all sizes and top-3 quote
rank exceeds 57\% of epochs.
\end{abstract}

\section{Introduction}

Decentralized exchange execution is fundamentally a routing problem.
Liquidity is fragmented across automated market maker pools, fee tiers,
protocols, and venues.
A user submitting a swap request does not interact with a single central limit
order book.
Instead, the execution system must decide which pools to use, in which order,
and how to split the input amount across multiple possible paths.

Recent work has approached DEX routing from several complementary directions.
Line-graph-based methods transform the token graph into a graph over directed
liquidity-pool edges and use this representation to identify profitable linear
trading paths~\cite{zhang2025linegraph}.
Subsequent extensions introduce BFS traversal rules, route splitting, and DEX
aggregator settings~\cite{zhang2025extensions}.
In parallel, empirical audits of AMM routing have shown that realized routes
can be evaluated against optimized benchmarks and that routing losses are
affected by pool activation, flow splitting, gas costs, stale information, and
adversarial execution~\cite{xi2025suboptimality}.

We address both the modeling and measurement sides of this problem.
On the theory side, we build a token multigraph whose arcs carry AMM exchange
functions, enforce pool-simple paths during candidate enumeration, and solve
flow allocation as a concave maximization with an explicit KKT certificate.
On the empirical side, we run synchronized quote requests to five production
aggregators across six trade sizes and measure rank, tracking ratio, and
shortfall relative to the best observed quote in each epoch.

\subsection{Contributions}

The paper makes four contributions, each with a corresponding implementation.

First, it defines a directed token multigraph formulation for AMM routing in
which venues induce parallel arcs carrying nonlinear exchange functions.
Pool-simple paths---paths that do not reuse the same liquidity pool---are
enforced during path construction via explicit bookkeeping, not as a
post-hoc filter.

Second, it formulates multi-path routing over a finite candidate set as a
path-separable concave allocation problem and implements continuous flow
allocation over $k = 20$ candidate paths, with a per-pool price-impact cap
applied during allocation to bound slippage on any single liquidity source.

Third, it introduces and implements a gas-aware candidate generation layer
and an improving-path certificate.
Edge scores $w_a = -\log\pi_a + \gamma g_a$ embed expected gas cost directly
into the graph traversal, so paths with poor net output are suppressed at
enumeration time rather than filtered afterwards.
After solving the allocation, the KKT multiplier $\lambda^*$ is used as a
threshold: a single shortest-path query determines whether any omitted
pool-simple path could improve the current solution.
In practice, the certificate confirms candidate-set sufficiency in the
majority of routing epochs.

Fourth, it provides a quote-level empirical evaluation on Ethereum mainnet
using repeated WETH--USDT quote sampling across six trade sizes, comparing
the implemented engine---which includes on-chain slippage tolerance enforced
at the smart-contract executor level---against four production DEX
aggregators.

\section{Related Work}

Danos, El~Khalloufi, and Prat~\cite{danos_goroen} were among the first to
formalize order routing as a graph problem on venue--token incidence structures.
The token multigraph we use is a directed refinement of that framework, extended
to accommodate parallel AMM arcs and pool-simple path constraints.
Three subsequent lines of work are most directly relevant.

\subsection{Graph-Based Token Routing in DEXs}

A closely related graph-theoretic approach is the line-graph-based routing
framework proposed by Zhang, Li, and Tessone~\cite{zhang2025linegraph}.
Their method starts from a directed token graph constructed from liquidity pools
and transforms it into a line graph in which vertices represent directed
liquidity-pool edges from the original token graph.
Routing is then performed through iterative updates over the line graph.
The motivation is that common DFS-based routing methods used in production DEXs
may ignore many feasible paths and therefore fail to identify profitable trading
routes.

Both the line-graph approach and our work begin from the same
abstraction: a DEX is modeled as a directed token graph whose edges correspond
to AMM swap opportunities, and AMM output functions are monotone and concave so
that trade size affects routing quality through slippage.
The methodological decomposition differs, however.
Line-graph routing focuses on path discovery through graph transformation and
iterative dynamic updates.
We instead separate routing into two layers:
(i)~candidate generation through gas-aware marginal path scoring and
$k$-shortest loopless paths, and
(ii)~continuous allocation over the selected candidates through convex
optimization.
This allows us to state optimality conditions in terms of marginal equalization
across used paths, rather than only selecting a single best linear path.

\subsection{Extensions of Line-Graph Routing}

Zhang and Tessone later extend the line-graph-based method in three directions:
a breadth-first-search link iteration rule, route splitting, and a DEX
aggregator setting~\cite{zhang2025extensions}.
The BFS extension reduces the number of ineffective line-graph iterations.
The route-splitting extension divides a large trade into smaller trades to
mitigate AMM slippage.
The aggregator extension generalizes to the case where multiple DEXs coexist
and several liquidity pools support the same token pair.

Route splitting in~\cite{zhang2025extensions} proceeds by dividing the input
into discrete equal-sized pieces and repeatedly recomputing routes after
updating reserves.
The convex allocation layer in the present paper is motivated by the same
economic force --- AMM slippage makes large single-pool routes inefficient ---
but uses continuous path allocation variables and characterizes the optimum
through KKT conditions rather than discrete recomputation.

\subsection{Convex Optimization for AMM Routing}

The foundational convex formulation for CFMM routing is due to Angeris,
Chitra, Evans, and Boyd~\cite{angeris_cfmm}, who show that optimal routing
over a network of constant function market makers without fixed costs reduces
to a tractable convex program; incorporating pool-activation costs yields a
mixed-integer convex formulation.
Under concavity and monotonicity assumptions, optimal allocations equalize
marginal returns across active venues or paths.
This marginal-equalization property is also visible in empirical work on AMM
routing, where optimal benchmarks are computed by solving allocation problems
over a defined pool universe~\cite{xi2025suboptimality}.

We work within the same convex perspective.
For a fixed candidate path set, the path-allocation problem becomes a concave
maximization over a simplex, and the KKT conditions yield a water-filling rule.
Enumerating all feasible paths is intractable in large networks, so we restrict
convex optimization to a candidate set generated by graph search---the two
roles stay separate.

\subsection{Empirical Measurement of Router Suboptimality}

Xi and Moallemi~\cite{xi2025suboptimality} (unpublished manuscript, Columbia
University, 2025) provide a large-scale empirical audit of AMM routing using
2.98~million WETH--USDC swaps on Ethereum mainnet.
They compare realized routes against optimized benchmarks and report
economically meaningful shortfalls.
Their benchmark suite includes the Support-Constrained Optimum~(SCO), the
Full-Venue Optimum~(FVO), and the Gas-Aware Full-Venue Optimum~(G-FVO).
They also show that information staleness, heavy-tailed loss distributions, and
sandwich attacks contribute materially to observed routing suboptimality.

Our empirical benchmark is related but measures a different object.
Xi and Moallemi compare realized on-chain routes against optimized execution
benchmarks reconstructed from pool states.
We compare quotes returned by production routing systems under synchronized
requests.
Our benchmark is therefore quote-level and comparative, whereas theirs is
execution-level and optimality-based.
A router that tracks the best observed quote in our benchmark is shown to be
competitive among production aggregators at quote time; whether it is globally
optimal over all feasible on-chain routes is a separate question not addressed
here.

\section{Graph and AMM Routing Model}\label{sec:model}

Let $\mathcal{V}$ be a finite set of venues or exchanges, and let $\mathcal{T}$
be a finite set of tokens.
Each venue $v \in \mathcal{V}$ supports a set of directed tradable token pairs
\[
\mathcal{E}_v \subseteq \mathcal{T} \times \mathcal{T}.
\]
The pair $(i,j)\in \mathcal{E}_v$ means that token $i$ can be swapped into
token $j$ on venue $v$ through an AMM mechanism.
Routing decisions are most naturally represented on a token graph, where nodes
are tokens and arcs are swap opportunities labeled by venue.

\begin{definition}[Token Graph]
Define the directed multigraph
\[
G_T = (\mathcal{T}, \mathcal{A}),
\]
where for every venue $v\in\mathcal{V}$ and every tradable pair
$(i,j)\in\mathcal{E}_v$, we include a directed arc
\[
a=(i \to j;\, v)\in\mathcal{A}.
\]
Multiple venues or pools may induce parallel arcs between the same ordered
token pair.
\end{definition}

A token path is a sequence of arcs
\[
p=(a_1,a_2,\dots,a_L),
\qquad
a_\ell=(t_{\ell-1}\to t_{\ell};\, v_\ell),
\]
mapping a source token $s=t_0$ to a destination token $d=t_L$.
Candidate paths are assumed to be \emph{pool-simple}: the same liquidity pool
cannot be used more than once within a single path.
Formally, if $\mathrm{pool}(a)$ denotes the liquidity pool associated with
arc $a$, then a path $p=(a_1,\ldots,a_L)$ is admissible only if
\[
\mathrm{pool}(a_\ell)\neq \mathrm{pool}(a_m)
\qquad \text{for all } \ell\neq m.
\]
This rules out reuse of the same AMM pool within one route, which is
economically natural: revisiting a pool after trading through it incurs
unnecessary fee and slippage costs.

A walk alternating between venues and tokens,
\[
v_0 \to t_0 \to v_1 \to t_1 \to \cdots \to v_L \to t_L,
\]
projects to the token path
\[
(t_0\to t_1;\, v_1),\;
(t_1\to t_2;\, v_2),\;
\dots,\;
(t_{L-1}\to t_L;\, v_L).
\]

\begin{lemma}[Path Correspondence]
Every alternating walk in the venue--token incidence structure from token $s$
to token $d$ corresponds to a directed walk in $G_T$.
If the walk does not reuse the same liquidity pool, it corresponds to an
admissible pool-simple token path.
Conversely, every pool-simple token path specifies an alternating
token--venue sequence.
\end{lemma}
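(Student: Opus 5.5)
The proof is a direct verification by unfolding the definitions. The only substantive work is fixing what ``alternating walk in the venue--token incidence structure'' means. I take it to be a sequence $v_0\to t_0\to v_1\to t_1\to\cdots\to v_L\to t_L$ with $t_0=s$, $t_L=d$. Each step $v_\ell\to t_\ell$ for $\ell\ge 1$, preceded by $t_{\ell-1}\to v_\ell$, is legitimate exactly when $(t_{\ell-1},t_\ell)\in\mathcal{E}_{v_\ell}$, i.e.\ when venue $v_\ell$ supports the directed swap $t_{\ell-1}\to t_\ell$. The initial venue $v_0$ only marks where the walk is entered and plays no role in the projection.

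\textbf{Forward direction.} Given such a walk, I will define $a_\ell=(t_{\ell-1}\to t_\ell;\,v_\ell)$ for $\ell=1,\dots,L$. By the definition of $G_T$, every pair $(t_{\ell-1},t_\ell)\in\mathcal{E}_{v_\ell}$ contributes the arc $a_\ell\in\mathcal{A}$. Consecutive arcs share the endpoint $t_\ell$, so $(a_1,\dots,a_L)$ is a directed walk in $G_T$ from $s$ to $d$; this is exactly the displayed projection. If the walk does not reuse a liquidity pool, meaning the pools $\mathrm{pool}(a_\ell)$ traversed at the steps are pairwise distinct, then the projected sequence satisfies $\mathrm{pool}(a_\ell)\neq\mathrm{pool}(a_m)$ for $\ell\neq m$. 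That is precisely the admissibility condition, so the projection is a pool-simple token path.

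\textbf{Converse.} Given a pool-simple token path $p=(a_1,\dots,a_L)$ with $a_\ell=(t_{\ell-1}\to t_\ell;\,v_\ell)$, I read off the venue labels $v_\ell$. I then form the sequence $t_0\to v_1\to t_1\to\cdots\to v_L\to t_L$, prepending an arbitrary $v_0$ if the convention requires a starting venue. Each arc $a_\ell\in\mathcal{A}$ exists only because $(t_{\ell-1},t_\ell)\in\mathcal{E}_{v_\ell}$, so every step of this sequence is a valid incidence, and by construction it projects back to $p$.

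The main obstacle is definitional rather than technical. When several pools of the same venue serve the same ordered pair, the venue label $v$ alone does not determine $\mathrm{pool}(a)$. I will handle this by reading ``venue'' at the granularity of individual pools, as the definition of $G_T$ permits when it allows ``multiple venues or pools'' to induce parallel arcs. Under that reading, arcs are in bijection with (venue/pool, ordered pair) incidences, the correspondence is well defined in both directions, and the pool-distinctness condition transfers verbatim between the walk and the token path.
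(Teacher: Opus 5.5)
Your proposal is correct and follows essentially the paper's argument: each step $t_{\ell-1}\to v_\ell\to t_\ell$ becomes the arc $(t_{\ell-1}\to t_\ell;\,v_\ell)$, token continuity makes the concatenation a directed walk in $G_T$, pool-distinctness carries over unchanged, and the converse inserts the venue labels between consecutive tokens. Your explicit handling of the dangling initial venue $v_0$ and your reading of ``venue'' at pool granularity make precise conventions that the paper leaves implicit, without changing the argument.
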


\begin{proof}
Each step ``use venue $v$ to swap token $i$ into token $j$'' is represented by
the arc $(i\to j;\,v)$.
Concatenating such steps preserves token continuity and directionality,
producing a directed walk in $G_T$.
Imposing the pool-simple restriction removes repeated use of the same pool and
yields an admissible token path.
Conversely, each admissible token path specifies an alternating sequence by
inserting the venue label between consecutive tokens.
\end{proof}

Each arc $a=(i\to j;\,v)\in\mathcal{A}$ carries an AMM exchange function
\[
\varphi_a:\Rpos \to \Rpos,
\]
where $\varphi_a(x)$ is the output amount of token $j$ received when input
$x$ of token $i$ is swapped on venue $v$.

\begin{assumption}[Admissible AMM Edge Function]\label{ass:admissible}
For every arc $a\in\mathcal{A}$:
\begin{enumerate}[label=(A\arabic*)]
    \item $\varphi_a(0)=0$;
    \item $\varphi_a$ is continuous and nondecreasing;
    \item $\varphi_a$ is concave on $\Rpos$;
    \item $\varphi_a$ is differentiable on $(0,\infty)$, with finite right
          derivative $\varphi_a'(0^+)$.
\end{enumerate}
\end{assumption}

Throughout the theoretical model, we use a path-separable approximation: once
a finite candidate set has been selected, each path exchange function is
evaluated as a function of the flow assigned to that path alone.
This abstracts from cross-path coupling caused by shared liquidity pools.
The pool-simple restriction prevents repeated use of the same pool within a
single path, but different candidate paths may still share pools unless
explicitly filtered.
The implications of this approximation are discussed in
Section~\ref{sec:limitations}.

The concavity assumption captures diminishing marginal returns caused by
slippage, consistent with constant function market maker
models~\cite{angeris_chitra,angeris_cfmm}.

For a token path $p=(a_1,\dots,a_L)$, define the path exchange function
\[
y_p(x) = (\varphi_{a_L}\circ \cdots \circ \varphi_{a_1})(x).
\]

\begin{lemma}[Concavity under Composition]\label{lem:composition}
If each $\varphi_{a_\ell}$ is concave and nondecreasing, then $y_p$ is
concave and nondecreasing on $\Rpos$.
\end{lemma}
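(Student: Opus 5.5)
I will argue by induction on the path length $L$, using the elementary fact that the composition $g\circ f$ of a concave nondecreasing outer function $g$ with a concave nondecreasing inner function $f$ is again concave and nondecreasing. The base case $L=1$ is immediate, since $y_p=\varphi_{a_1}$. For the inductive step, write $y_p = \varphi_{a_L}\circ y_{p'}$, where $p'=(a_1,\dots,a_{L-1})$. By the induction hypothesis, $y_{p'}$ is concave and nondecreasing on $\Rpos$. By Assumption~\ref{ass:admissible}, $y_{p'}$ also maps $\Rpos$ into $\Rpos$, since each $\varphi_a$ does. So the composition is well defined on the domain where $\varphi_{a_L}$ is concave.

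The core step is the two-function lemma. Let $f:\Rpos\to\Rpos$ be concave and nondecreasing, and let $g:\Rpos\to\Rpos$ be concave and nondecreasing. Monotonicity of $g\circ f$ is immediate: $x\le x'$ gives $f(x)\le f(x')$, hence $g(f(x))\le g(f(x'))$. For concavity, fix $x,x'\in\Rpos$ and $\theta\in[0,1]$, and chain two inequalities:
\[
g\bigl(f(\theta x+(1-\theta)x')\bigr)\;\ge\; g\bigl(\theta f(x)+(1-\theta)f(x')\bigr)\;\ge\;\theta g(f(x))+(1-\theta)g(f(x')).
\]
The first inequality uses concavity of $f$, which gives $f(\theta x+(1-\theta)x')\ge \theta f(x)+(1-\theta)f(x')$, followed by monotonicity of $g$. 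The second uses concavity of $g$, applied at the points $f(x),f(x')\in\Rpos$, whose convex combination also lies in $\Rpos$.

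The only point needing care is the first inequality, because it is the one place where monotonicity of the outer function is essential. Without it, a concave function composed with a concave function need not be concave. It is also where the range condition $\varphi_a(\Rpos)\subseteq\Rpos$ matters: it keeps every argument inside the domain of the next function. Beyond these two points, the argument requires no differentiability. Assumptions (A1) and (A4) are not needed here, although (A1) gives $y_p(0)=0$ as a by-product.
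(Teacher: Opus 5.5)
Your proof is correct and takes the same route as the paper: induction on $L$ via $y_p=\varphi_{a_L}\circ y_{p'}$, with monotonicity following because a composition of nondecreasing maps is nondecreasing. The only difference is that you prove the concave--nondecreasing composition rule directly, via the two-inequality chain and an explicit range check, whereas the paper cites this closure property from Boyd--Vandenberghe.
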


\begin{proof}
Nondecreasing: the composition of nondecreasing functions is nondecreasing.
Concavity by induction on path length $L$.
Base case $L=1$: $y_p = \varphi_{a_1}$, concave by
Assumption~\ref{ass:admissible}.
Inductive step: assume $y_p^{(L-1)} =
\varphi_{a_{L-1}}\circ\cdots\circ\varphi_{a_1}$ is concave.
Then $y_p^{(L)} = \varphi_{a_L} \circ y_p^{(L-1)}$.
Since $\varphi_{a_L}$ is concave and nondecreasing
(Assumption~\ref{ass:admissible}) and $y_p^{(L-1)}$ is concave by hypothesis,
their composition is concave by the standard closure
property~\cite[Ch.~3]{boyd2004convex}.
\end{proof}

\section{Path-Flow Allocation and Optimality Conditions}

Fix a source token $s$, destination token $d$, and total input amount $X>0$.
Let $\mathcal{P}$ be a finite set of candidate token paths from $s$ to $d$.

\begin{definition}[Path-Allocation Route]
A route is a vector $\delta\in \Rpos^{|\mathcal{P}|}$ such that
\[
\sum_{p\in\mathcal{P}}\delta_p = X.
\]
The total output is $U(\delta)=\sum_{p\in\mathcal{P}} y_p(\delta_p)$.
\end{definition}

Each $y_p$ is constructed from venue-labeled AMM edge functions $\varphi_a$,
so the formulation is path-level but remains edge-aware.

\begin{theorem}[Concave Routing over a Candidate Set]\label{thm:convex}
Under Assumption~\ref{ass:admissible}, for any finite candidate set
$\mathcal{P}$, the problem
\[
\max_{\delta\ge 0}\ \sum_{p\in\mathcal{P}} y_p(\delta_p)
\quad\text{s.t.}\quad
\sum_{p\in\mathcal{P}}\delta_p = X
\]
admits a global optimum.
\end{theorem}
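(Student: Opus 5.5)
The plan is a direct Weierstrass argument. I will show that the feasible set is a nonempty compact subset of $\R^{|\mathcal{P}|}$ and that the objective is continuous on it. Concavity is not needed for existence; I will record it only because it upgrades any local maximizer to a global one.

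\emph{Feasible set.} The set
\[
\Delta_X=\Bigl\{\delta\in\Rpos^{|\mathcal{P}|}:\ \textstyle\sum_{p}\delta_p=X\Bigr\}
\]
is a scaled simplex. It is closed, because it is the intersection of finitely many closed half-spaces with a hyperplane. It is bounded, since $0\le\delta_p\le X$ for every $p$. It is nonempty, for example $\delta=X e_{p_0}$ for any $p_0\in\mathcal{P}$. One degenerate case needs a remark: if $\mathcal{P}=\emptyset$, the constraint $\sum_p\delta_p=X>0$ cannot be met. I will therefore state that $\mathcal{P}$ is assumed nonempty, which is implicit in the formulation.

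\emph{Continuity of the objective.} By (A2) each $\varphi_a$ is continuous on $\Rpos$ and maps into $\Rpos$, so compositions are well defined. Each $y_p=\varphi_{a_L}\circ\cdots\circ\varphi_{a_1}$ is then continuous as a finite composition of continuous maps. $U(\delta)=\sum_p y_p(\delta_p)$ is continuous as a finite sum of continuous functions of the coordinates. Weierstrass's extreme value theorem then gives a maximizer $\delta^*\in\Delta_X$, and this is a global optimum of the stated problem.

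\emph{Concavity remark.} By Lemma~\ref{lem:composition}, each $y_p$ is concave, so $U$ is a separable sum of concave functions and hence concave on the convex set $\Delta_X$. Consequently every local maximizer is global, and the optimal set is convex. This is what later justifies treating KKT stationarity as sufficient.

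I expect no real obstacle. The only points needing care are two. First, continuity must come from (A2) rather than from concavity: concavity alone only gives continuity on the interior, and could fail at the boundary point $0$. Second, the nonemptiness of $\mathcal{P}$ must be made explicit.
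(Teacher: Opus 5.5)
Your proof is correct and takes the same route as the paper: compactness of the scaled simplex plus continuity of $U$ (Weierstrass), with concavity via Lemma~\ref{lem:composition} used only to note that first-order/KKT points are global optima. Your version is more careful than the paper's, which cites ``a continuous concave function'' without noting that continuity comes from (A2) rather than from concavity, and without flagging the need for $\mathcal{P}\neq\emptyset$.
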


\begin{proof}
By Lemma~\ref{lem:composition}, each $y_p$ is concave, so
$U(\delta)=\sum_p y_p(\delta_p)$ is concave.
The feasible region is a simplex, hence compact and convex.
A continuous concave function over a compact convex set admits a global
optimum; any first-order KKT point is globally optimal.
\end{proof}

\begin{theorem}[KKT Marginal Equalization]\label{thm:kkt}
Under Assumption~\ref{ass:admissible}, with $\mathcal{P}$ finite and nonempty,
an allocation $\delta^*$ is optimal if and only if there exists
$\lambda^*\in\R$ such that:
\begin{enumerate}[label=(\roman*)]
    \item for every active path $p$ with $\delta_p^*>0$,\quad
          $y_p'(\delta_p^*)=\lambda^*$;
    \item for every inactive path $p$ with $\delta_p^*=0$,\quad
          $y_p'(0^+)\le \lambda^*$.
\end{enumerate}
\end{theorem}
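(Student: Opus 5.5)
The plan is to prove the two directions separately. Both rest on one tool: the first-order (supergradient) inequality for the concave functions $y_p$, which are concave and nondecreasing by Lemma~\ref{lem:composition}. I will work with one-sided derivatives $y_p'(x^-)$ and $y_p'(x^+)$, which exist at every $x>0$ because $y_p$ is concave. At $x=0$ the right derivative $y_p'(0^+)$ is finite: by the chain rule for right derivatives of monotone concave maps it is at most $\prod_\ell \varphi_{a_\ell}'(0^+)$, each factor finite by (A4). Wherever $y_p$ is differentiable the two one-sided derivatives coincide with $y_p'$.

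\emph{Sufficiency.} Suppose $\lambda^*$ satisfies (i)--(ii) and let $\delta$ be any feasible route. For an active path $p$, concavity gives
\[
y_p(\delta_p)\le y_p(\delta_p^*)+y_p'(\delta_p^*)(\delta_p-\delta_p^*)=y_p(\delta_p^*)+\lambda^*(\delta_p-\delta_p^*).
\]
For an inactive path $p$, concavity at $0$ together with $y_p(0)=0$ gives
\[
y_p(\delta_p)\le y_p'(0^+)\,\delta_p\le \lambda^*\delta_p=y_p(\delta_p^*)+\lambda^*(\delta_p-\delta_p^*),
\]
where the second inequality uses $\delta_p\ge 0$ together with (ii). Summing over $p\in\mathcal{P}$ yields $U(\delta)\le U(\delta^*)+\lambda^*\big(\sum_p\delta_p-\sum_p\delta_p^*\big)=U(\delta^*)$, because both routes sum to $X$. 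Hence $\delta^*$ is optimal.

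\emph{Necessity.} Let $\delta^*$ be optimal (it exists by Theorem~\ref{thm:convex}). Since $X>0$, at least one path is active. I will use an exchange argument instead of invoking an abstract KKT theorem, because it avoids constraint qualifications and the boundary at $0$.

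Take an active path $p$ and any other path $q$. For small $\varepsilon>0$, moving $\varepsilon$ of flow from $p$ to $q$ is feasible. Optimality then gives
\[
y_q(\delta_q^*+\varepsilon)-y_q(\delta_q^*)\le y_p(\delta_p^*)-y_p(\delta_p^*-\varepsilon).
\]
Dividing by $\varepsilon$ and letting $\varepsilon\downarrow 0$ gives $y_q'(\delta_q^{*+})\le y_p'(\delta_p^{*-})$. I then set $\lambda^*=y_p'(\delta_p^*)$ for one fixed active path $p$ and apply this inequality in both directions:
\begin{itemize}[label={}]
\item For two active paths, swapping their roles yields equality of the marginals, which is (i).
\item For an inactive path $q$, the inequality with $\delta_q^*=0$ reads $y_q'(0^+)\le\lambda^*$, which is (ii).
\end{itemize}

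\emph{Main obstacle.} The delicate step is regularity: making sure $y_p'(\delta_p^*)$ is a genuine two-sided derivative at active points, so that $y_p'(\delta_p^{*-})=y_p'(\delta_p^{*+})$ and the two exchange inequalities close up to an equality.
\begin{itemize}[label={}]
\item By (A4) and the chain rule, $y_p$ is differentiable at $x>0$ whenever each intermediate amount $\varphi_{a_\ell}\circ\cdots\circ\varphi_{a_1}(x)$ is strictly positive.
\item If some intermediate amount vanishes at $x>0$, then monotonicity and (A1) force the corresponding prefix to be identically $0$ on $[0,x]$. In that case $y_p\equiv 0$ near $x$ and the derivative is $0$.
\end{itemize}
So $y_p$ is differentiable on $(0,\infty)$ and the argument goes through. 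If this case analysis turned out to be messier than expected, the fallback is to read (i) as $\lambda^*\in[y_p'(\delta_p^{*+}),y_p'(\delta_p^{*-})]$; the sufficiency proof is unchanged under that reading.
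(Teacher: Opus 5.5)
Your proof is correct, but it takes a different route from the paper. The paper invokes the general KKT theorem for a concave objective under linear constraints. It observes that the simplex has nonempty relative interior, so a constraint qualification holds, writes the Lagrangian with $\lambda$ for the budget constraint and $\mu_p\ge 0$ for nonnegativity, and reads (i) and (ii) off stationarity $y_p'(\delta_p^*)-\lambda^*+\mu_p^*=0$ and complementary slackness.

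You prove both directions by hand. Sufficiency is the supergradient inequality summed over paths, which is what the abstract sufficiency argument does internally. Necessity comes from pairwise flow-exchange perturbations, which is the KKT necessity argument specialized to a simplex.

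Your version is more careful at exactly the points the paper glosses over:
\begin{itemize}
\item No constraint qualification is needed.
\item The boundary $\delta_p^*=0$ is handled through genuine right derivatives. The paper's stationarity equation at an inactive path silently uses $y_p'(0^+)$ for a function defined only on $\Rpos$.
\item You actually verify that $y_p$ is differentiable on $(0,\infty)$. The paper takes this for granted, although (A4) is stated only for the individual $\varphi_a$, not for their compositions.
\end{itemize}
Your fallback reading of (i) as $\lambda^*\in[y_p'(\delta_p^{*+}),y_p'(\delta_p^{*-})]$ also extends the result to kinked edge functions (e.g.\ tick boundaries in concentrated-liquidity pools), where the smooth statement does not directly apply. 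The paper's route buys brevity. It also ties $\lambda^*$ explicitly to the Lagrange multiplier of the budget constraint, which is the language Theorem~\ref{thm:certificate} is phrased in.

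One small repair is needed in your degenerate case. Monotonicity and (A1) only show that the vanishing prefix is zero on $[0,x]$. To conclude $y_p\equiv 0$ on a two-sided neighborhood of $x$, you also need concavity of the prefix (Lemma~\ref{lem:composition}). A concave nondecreasing function that vanishes on $[0,x]$ has nonpositive slopes to the right of $x$, so it vanishes on all of $\Rpos$.
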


\begin{proof}
The simplex has nonempty relative interior, so the standard constraint
qualification holds.
The KKT conditions are necessary and sufficient because the objective is
concave and the feasible set is convex.
The Lagrangian is
\[
\mathcal{L}(\delta,\lambda,\mu)
=
\sum_{p} y_p(\delta_p)
-
\lambda\!\left(\sum_p \delta_p-X\right)
+
\sum_p \mu_p\delta_p,
\quad \mu_p\ge 0.
\]
Stationarity: $y_p'(\delta_p^*)-\lambda^*+\mu_p^*=0$.
Complementary slackness: $\mu_p^*\delta_p^*=0$.
If $\delta_p^*>0$ then $\mu_p^*=0$, giving $y_p'(\delta_p^*)=\lambda^*$.
If $\delta_p^*=0$ then $\mu_p^*\ge 0$, giving $y_p'(0^+)\le \lambda^*$.
\end{proof}

The condition is a water-filling rule: every used path has the same marginal
output per unit input; unused paths have initial marginal output no greater
than this common value.

Define the marginal gain $m_p(\delta_p)=y_p'(\delta_p)$.
Since $y_p$ is concave, $m_p$ is nonincreasing.
A natural discrete approximation allocates a small increment $\Delta$ to the
highest-marginal path and converges to the KKT solution as $\Delta\to 0$.

\begin{proposition}[Greedy Marginal Heuristic]\label{prop:greedy}
Assume each $y_p$ is differentiable and concave.
A greedy rule that repeatedly allocates increment $\Delta$ to the path with
the largest current marginal gain is a consistent discretization of the
KKT water-filling condition as $\Delta\to 0$.
\end{proposition}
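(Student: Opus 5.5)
We make the statement precise as follows. Take $\Delta = X/N$ and run $N$ greedy steps starting from $\delta^{(0)}=0$; call the result $\delta^{\Delta}$. We will show two things. First, every limit point of $\delta^{\Delta}$ as $N\to\infty$ satisfies conditions (i)--(ii) of Theorem~\ref{thm:kkt}. Second, $U(\delta^{\Delta})\to U^*$, the optimal value guaranteed by Theorem~\ref{thm:convex}. Compactness of the simplex gives limit points. By Theorem~\ref{thm:kkt}, any KKT point is optimal, so together these two facts give consistency.

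\textbf{Discrete KKT certificate.} Fix $N$ and let $\lambda_N$ be the marginal value $m_q(\delta_q)$ of the path $q$ chosen at the last step, evaluated before that increment. The argument uses the monotonicity of each $m_p$. Suppose path $p$ last received an increment at state $\delta_p^\Delta-\Delta$. At that time its marginal was the maximum over all paths. Current marginals are no larger than their values at any earlier time, since allocations only grow and $m$ is nonincreasing. This gives
\[
m_p(\delta^\Delta_p) \le \lambda_N \le m_p(\delta^\Delta_p-\Delta)
\quad\text{for active } p,
\qquad
m_p(0^+)\le \lambda_N \quad\text{for inactive } p.
\]
This is an $\varepsilon$-version of the water-filling condition. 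The slack is the oscillation of $m_p$ over an interval of length $\Delta$.

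\textbf{Passage to the limit.} Take a subsequence with $\delta^{\Delta}\to\bar\delta$ and $\lambda_N\to\bar\lambda$. The multipliers are bounded, since $\lambda_N$ lies between $\min_p m_p(X)$ and $\max_p m_p(0^+)$. Finiteness of these bounds uses (A4) and Lemma~\ref{lem:composition}. The main obstacle is this limit step, because $m_p$ may be discontinuous for concave $y_p$ in general. The statement assumes $y_p$ differentiable, and a differentiable concave function on an open interval is automatically $C^1$. Hence $m_p$ is continuous on $(0,\infty)$. For $p$ with $\bar\delta_p>0$, both bounds in the sandwich converge to $m_p(\bar\delta_p)$, so $m_p(\bar\delta_p)=\bar\lambda$.

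Paths with $\bar\delta_p=0$ but $\delta^\Delta_p>0$ along the sequence need separate care. For these we use the lower bound $m_p(\delta^\Delta_p)\le\lambda_N$ together with right-continuity of $m_p$ at $0$. Right-continuity follows because $m_p$ is monotone, and for a concave function the limit $\lim_{x\downarrow0}y_p'(x)$ equals the right derivative $y_p'(0^+)$. This yields $y_p'(0^+)\le\bar\lambda$, which is condition (ii).

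\textbf{Convergence of value.} Continuity of $U$ on the compact simplex gives $U(\delta^{\Delta})\to U(\bar\delta)$ along the subsequence. Since $\bar\delta$ is optimal, $U(\bar\delta)=U^*$. Every subsequence has a further subsequence with this limit, so the whole sequence of values converges to $U^*$. The limit allocation itself is unique only under strict concavity, which we mention as a remark.

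Optionally, we can add a quantitative bound. Concavity gives $U^*-U(\delta)\le \sum_p m_p(\delta_p)(\delta^*_p-\delta_p)$. Combining this with the sandwich bound yields $U^*-U(\delta^\Delta)\le X\max_p\big(m_p(\delta^\Delta_p-\Delta)-m_p(\delta^\Delta_p)\big)$. This tends to $0$ under uniform continuity of the $m_p$ on the relevant interval, and gives an $O(\Delta)$ rate if the $m_p$ are Lipschitz.
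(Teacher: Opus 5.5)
Your proposal is correct and follows the same route as the paper's proof sketch: nonincreasing marginals, greedy balancing of the active marginals, and passage to the $\Delta\to 0$ limit to recover conditions (i)--(ii) of Theorem~\ref{thm:kkt}. What you add is the rigor the sketch lacks. Your terminal sandwich $m_p(\delta^\Delta_p)\le\lambda_N\le m_p(\delta^\Delta_p-\Delta)$ replaces the informal claim that each greedy step ``reduces marginal imbalance'', which need not hold step by step, and your compactness argument, using continuity of $y_p'$ on $(0,\infty)$ and right-continuity at $0$, makes the ``infinitesimal limit'' precise.
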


\begin{proof}[Proof sketch]
Marginal gains are nonincreasing because each $y_p$ is concave.
The greedy step reduces marginal imbalance across active paths.
In the infinitesimal limit, a stationary allocation satisfies: no two active
paths differ in marginal value, and no inactive path has a larger initial
marginal value than the active common marginal --- exactly the KKT conditions
of Theorem~\ref{thm:kkt}.
\end{proof}

\section{Candidate Generation and Improving-Path Certification}

Candidate generation uses a marginal linearization of the nonlinear edge
functions, augmented by a gas cost term so that paths with high execution
cost are penalized during enumeration rather than filtered afterwards.

For each arc $a=(i\to j;\,v)$, define a marginal exchange proxy at a reference
input $\bar{x}_a$:
\[
\pi_a=\varphi_a'(\bar{x}_a).
\]
Gas cost $g_a \ge 0$ denotes the expected on-chain cost (denominated in the
output token) of executing arc $a$.
In the log domain, the combined edge score is
\[
w_a = -\log \pi_a + \gamma\, g_a,
\qquad
\ell(p) = \sum_{a\in p} w_a,
\]
where $\gamma > 0$ is a trade-off parameter that controls how aggressively gas
cost suppresses low-output arcs.
Maximizing net output per unit input is therefore equivalent to minimizing
$\ell(p)$, with gas integrated directly into the graph search rather than
applied as a post-hoc correction.

\begin{proposition}[Gas-Aware Candidate Enumeration]
Let $n=|\mathcal{T}|$ and $m=|\mathcal{A}|$.
Given gas-augmented edge scores $\{w_a\}$ and the pool-simple admissibility
constraint, candidate routes are generated by enumerating the $k$ loopless
token paths with smallest $\ell(p)$.
When all scores are nonneg\-ative, Yen's $k$-shortest-paths
algorithm~\cite{yen1971} applies with Dijkstra subroutines.
When scores may be negative (some pool marginal rate exceeds one), the
Dijkstra subroutine is replaced by Bellman--Ford; pool-simple and loopless
constraints are enforced by path bookkeeping at each deviation step.
\end{proposition}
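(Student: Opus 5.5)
We prove this in three parts: an exactness claim for the score, correctness of Yen's procedure with the pool-simple constraint, and the negative-score case with its complexity.

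\emph{Exactness of the score.} Fix a path $p$. By the chain rule applied to $y_p=\varphi_{a_L}\circ\cdots\circ\varphi_{a_1}$, the marginal rate of $p$ at the reference inputs is the product $\prod_{a\in p}\pi_a$. Taking $-\log$ turns this product into a sum, so with $\gamma=0$,
\[
\ell(p)=-\log\prod_{a\in p}\pi_a .
\]
Ranking paths by increasing $\ell(p)$ is therefore the same as ranking them by decreasing linearized output per unit input. For $\gamma>0$ the term $\gamma g_a$ is additive along arcs, so $\ell(p)$ remains an additive arc-weight path length. This additivity is the only property Yen's algorithm needs. The statement is the approximation claimed in the text, not an exact equivalence with $y_p$.

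\emph{Correctness of Yen's procedure with pool-simplicity.} We recall Yen's deviation scheme. Take the $j$-th shortest path; for each spur node, keep its root prefix, delete the arcs used by previously found paths that share that prefix, delete the root nodes, and run a shortest-path subroutine from the spur node to $d$. We show by induction on $j$ that the $j$-th extracted path is the $j$-th shortest loopless path. The standard argument carries over: any loopless path not yet found deviates from some already-found path at a first spur node, so it is feasible in that spur subproblem. The spur candidate found there is no longer than it.

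To add pool-simplicity, we also delete, in each spur subproblem, every arc whose pool already appears in the root prefix. Loopless token paths cannot repeat the same arc, but distinct token arcs could map to the same pool only through the pool's reverse direction, and that would revisit a token. Hence pool-simplicity is enforced by this bookkeeping. The induction still holds, because the deleted arcs could never lie on an admissible completion.

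\emph{Negative scores, complexity, and the main obstacle.} When all $w_a\ge 0$, Dijkstra is valid in each subproblem. This gives the usual $O(kn(m+n\log n))$ bound. When some $\pi_a>1$ makes $w_a<0$, Dijkstra is invalid, so we substitute Bellman--Ford. Bellman--Ford is correct for shortest \emph{walks} provided there is no negative cycle. This is the main obstacle. A negative cycle in $\ell$ corresponds to a marginal arbitrage loop, and then the shortest \emph{simple} path problem is NP-hard in general. So the claim can only hold under an added hypothesis, and we will make it explicit: either no negative cycles at the reference linearization, which is a no-arbitrage condition, or a bounded hop count $L_{\max}$. Under $L_{\max}$, Bellman--Ford run for $L_{\max}$ rounds on the layered graph gives minimum-score walks of at most $L_{\max}$ hops. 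These walks are then checked for looplessness and pool-simplicity by the same bookkeeping, and any that fail are discarded. Under either hypothesis Yen's induction goes through verbatim, with cost $O(knm)$, or $O(k\,n\,L_{\max}m)$ in the bounded-hop case.
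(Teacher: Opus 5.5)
The paper gives no proof of this proposition beyond the pointer to Yen, so your argument has to carry the claim alone. It fails at the pool-simplicity step.

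\textbf{Pool-simplicity.} You assert that two distinct arcs of one pool can lie on a loopless path only through the pool's reverse direction. That holds only for two-token pools, and there looplessness already implies pool-simplicity, so the ``explicit bookkeeping'' would be vacuous. The engine in the paper integrates Curve Stable, Crypto TriPool and NG pools, which can hold three or more coins and induce arcs between every ordered pair of them. For example, DAI$\to$USDC$\to$USDT through the same 3pool is token-loopless but not pool-simple.

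Deleting arcs of pools used in the root prefix does nothing about such reuse \emph{inside} the spur path. So the path returned by your Dijkstra (or Bellman--Ford) call need not be admissible. Your induction step---that each spur subproblem yields an admissible candidate no longer than every undiscovered admissible path deviating there---is therefore unsupported, and discarding inadmissible spurs loses those paths for good.

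The subroutine cannot simply be patched either. Whether a partial path can be extended depends on the set of pools it has already used, so optimal substructure fails. In the paper's abstract model, which does not restrict which arcs a pool carries, even deciding whether a pool-simple $s$--$d$ path exists is NP-complete. One reduction from 3-SAT: give each literal occurrence its own pool. Variable $x_i$ gets a ``true'' branch with one arc in the pool of each occurrence of $\neg x_i$, and a ``false'' branch with one arc in the pool of each occurrence of $x_i$. Each clause gadget then has one parallel arc per literal occurrence, in that occurrence's pool. An exact statement therefore needs an extra hypothesis on pools. The alternative is standard Yen over loopless paths followed by filtering, which gives up the $k$-iteration bound.

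\textbf{Negative scores.} You are right, and sharper than the paper, that Bellman--Ford needs the absence of negative cycles and that shortest simple paths are NP-hard otherwise. But your bounded-hop fallback does not restore Yen's invariant. If the best walk of at most $L_{\max}$ hops from a spur node repeats a token or a pool and you discard it, that spur node contributes no candidate, and the admissible paths deviating there are never generated. Moreover, hop-bounded shortest simple paths stay NP-hard when $L_{\max}$ is part of the input: all weights $-1$ with $L_{\max}=n-1$ encodes Hamiltonian path.

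Under the no-negative-cycle hypothesis the Yen argument is fine, pools aside. The cost, however, is $O(kn\cdot nm)=O(kn^2m)$, not $O(knm)$, because each of up to $n$ spur nodes per iteration runs an $O(nm)$ Bellman--Ford. Better still, a single Bellman--Ford yields Johnson potentials whose reduced weights stay nonnegative on every Yen subgraph. Dijkstra then applies throughout, for a total of $O(nm+kn(m+n\log n))$.

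\textbf{Score interpretation.} $\prod_{a\in p}\pi_a$ is the chain-rule derivative of $y_p$ only if the per-arc reference inputs are consistent along $p$. It is exact at zero input, which is the case the certificate uses.
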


We write the resulting candidate set as
$\mathcal{P}_k=\textsf{YenKSP}(G_T,s,d;k)$.

Enumerating $k$ paths does not guarantee global optimality.
However, the KKT multiplier $\lambda^*$ from the candidate-set allocation
provides a certificate.
Adding an infinitesimal flow to any path $q\notin\mathcal{P}_k$ improves the
objective if and only if $y_q'(0^+)>\lambda^*$.
Using chain derivatives at zero,
\[
y_q'(0^+)=\prod_{a\in q}\varphi_a'(0^+),
\]
so the improving-path search reduces to a shortest-path problem:
\[
\max_q\, y_q'(0^+)
\;\Longleftrightarrow\;
\min_q \sum_{a\in q}-\log \varphi_a'(0^+).
\]

\begin{theorem}[Certificate via Improving-Path Search]\label{thm:certificate}
Under Assumption~\ref{ass:admissible} and the path-separable model, let
$\delta^*$ solve the allocation problem on $\mathcal{P}_k$ with
multiplier~$\lambda^*$.
If
\[
\max_{q:\,s\to d,\;q\;\mathrm{pool\text{-}simple}} y_q'(0^+)\le \lambda^*,
\]
then $\delta^*$ is globally optimal over all pool-simple paths within the
path-separable model.
\end{theorem}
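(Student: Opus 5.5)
The plan is to reduce the statement to the sufficiency direction of Theorem~\ref{thm:kkt}, applied on the enlarged candidate set of all pool-simple $s\to d$ paths. Let $\mathcal{P}_\infty$ denote this set. Since the pool set is finite and pool-simple paths cannot repeat a pool, every such path has length at most the number of pools. Hence $\mathcal{P}_\infty$ is finite, and Theorems~\ref{thm:convex} and~\ref{thm:kkt} apply to it verbatim under Assumption~\ref{ass:admissible} and the path-separable model. I will assume $\mathcal{P}_k\subseteq\mathcal{P}_\infty$, which holds because candidate generation enforces pool-simplicity.

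Define the extended allocation $\hat\delta$ on $\mathcal{P}_\infty$ by $\hat\delta_p=\delta^*_p$ for $p\in\mathcal{P}_k$ and $\hat\delta_q=0$ otherwise. It is feasible, and $U(\hat\delta)=U(\delta^*)$ because $y_q(0)=0$ by (A1) and composition. I then verify the KKT conditions of Theorem~\ref{thm:kkt} on $\mathcal{P}_\infty$ with the same multiplier $\lambda^*$:
\begin{itemize}
\item For active paths, which all lie in $\mathcal{P}_k$, condition (i) holds because $\delta^*$ satisfies it on $\mathcal{P}_k$. This uses the necessity direction of Theorem~\ref{thm:kkt} applied to the optimal $\delta^*$.
\item For inactive paths in $\mathcal{P}_k$, condition (ii) holds for the same reason.
\item For inactive paths $q\notin\mathcal{P}_k$, condition (ii) is exactly $y_q'(0^+)\le\lambda^*$. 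This is implied by the hypothesis, since the maximum over all pool-simple paths dominates each individual $q$.
\end{itemize}
The sufficiency direction of Theorem~\ref{thm:kkt} then gives that $\hat\delta$ is optimal on $\mathcal{P}_\infty$, which is the claim.

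To avoid depending on differentiability subtleties, I would also give the underlying concavity argument directly. Let $\delta$ be any feasible allocation on $\mathcal{P}_\infty$. By concavity of each $y_p$ (Lemma~\ref{lem:composition}), $y_p(\delta_p)\le y_p(\hat\delta_p)+g_p(\delta_p-\hat\delta_p)$, where $g_p=y_p'(\hat\delta_p)$ if $\hat\delta_p>0$ and $g_p=y_p'(0^+)$ if $\hat\delta_p=0$. Summing over $p$ gives
\[
U(\delta)-U(\hat\delta)\le\sum_p g_p(\delta_p-\hat\delta_p).
\]
On active paths $g_p=\lambda^*$. On inactive paths $g_p\le\lambda^*$ and $\delta_p-\hat\delta_p=\delta_p\ge0$. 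Therefore the right-hand side is at most $\lambda^*\sum_p(\delta_p-\hat\delta_p)=\lambda^*(X-X)=0$.

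The main obstacle is the technical care around $y_q'(0^+)$ for composite paths. I need the right derivative at $0$ to exist and to equal $\prod_{a\in q}\varphi_a'(0^+)$. This follows from the chain rule for right derivatives, using $\varphi_a(0)=0$ so that each inner map sends $0$ to $0$ and the evaluation point stays at $0$ for every arc. It also uses (A4), which makes each factor finite.

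A second delicate point is the case where $y_p$ may fail to be differentiable at an interior point. This can happen at a kink of an inner function, since composition preserves only one-sided derivatives. I would handle it by replacing derivatives with supergradients in the inequality above. Condition (i) is then read as $\lambda^*\in\partial y_p(\delta^*_p)$, which is all the argument needs.
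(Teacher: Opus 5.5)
Your proposal is correct and follows the same route as the paper. You extend $\delta^*$ by zeros to the set of all pool-simple $s\to d$ paths, verify conditions (i)--(ii) of Theorem~\ref{thm:kkt} there with the same $\lambda^*$ (using the hypothesis only for omitted paths $q\notin\mathcal{P}_k$), and conclude by sufficiency, while making explicit what the paper's brief proof leaves implicit: finiteness of that path set so Theorem~\ref{thm:kkt} applies, $y_q(0)=0$, and the supergradient inequality behind sufficiency; your interior-kink fallback is unnecessary, since under Assumption~\ref{ass:admissible} each composite $y_p$ is already differentiable on $(0,\infty)$.
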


\begin{proof}
The active paths satisfy marginal equalization by Theorem~\ref{thm:kkt}.
The displayed condition states that every omitted admissible path has initial
marginal gain no larger than $\lambda^*$.
Hence no omitted path can enter the support while satisfying complementary
slackness with a higher marginal gain, and the KKT conditions hold over the
enlarged path set.
\end{proof}

\section{Computational Complexity}

For a fixed finite candidate set $\mathcal{P}$, the allocation problem is
tractable: it is a separable concave maximization over a simplex with a
water-filling structure solvable by standard convex methods or
one-dimensional search when inverse marginal functions are available.

The difficult step is constructing a high-quality candidate set, since the
number of simple paths grows exponentially with graph size.
Two extensions make the full problem substantially harder.
First, fixed pool-activation costs or constraints on the number of active paths
introduce discrete decisions and lead to mixed-integer formulations.
Second, when multiple candidate paths share liquidity pools, the path-separable
objective no longer accurately describes execution because allocations are
coupled through shared pool reserves.
This pushes the model toward general nonlinear network-flow formulations.
Minimum concave-cost network flow is NP-hard in
general~\cite{guisewite1990}, and remains hard even on restricted graph
structures~\cite{he2012}, which confirms that the path-separable
candidate-set reduction is not a mere convenience but a necessary
approximation.

The practical architecture is therefore layered:
\begin{enumerate}[leftmargin=*]
    \item generate a pool-simple candidate path set via gas-aware graph search,
          with edge scores $w_a = -\log\pi_a + \gamma g_a$ encoding both
          marginal exchange rate and expected execution cost;
    \item allocate continuously over the selected candidates, subject to a
          per-path price-impact cap that limits slippage on any single pool;
    \item apply the improving-path certificate to check whether any omitted
          path has a larger initial marginal value;
    \item execute via a smart contract with slippage tolerance enforced
          on-chain, reverting if realized output falls below the quoted
          minimum.
\end{enumerate}

\section{Empirical Benchmark Environment}

The theoretical framework raises a practical question: does a routing engine
consistently track the best available quote under realistic market conditions?

We address this with an empirical benchmark based on repeated quote sampling
across multiple production routing systems on Ethereum mainnet.
The objective is to observe how different engines respond to identical swap
requests under approximately the same market conditions, not to simulate
performance in an artificial environment.

The benchmark focuses on swaps from $\text{WETH} \to \text{USDT}$.
This pair carries high on-chain volume with liquidity distributed across
multiple AMM protocols and fee tiers, making it well-suited for evaluating
routing behavior under fragmented liquidity.

Quotes were requested for six input sizes:
\begin{center}
\begin{tabular}{c}
\toprule
Input Size \\
\midrule
0.1 WETH \\
0.3 WETH \\
0.6 WETH \\
1 WETH \\
2 WETH \\
5 WETH \\
\bottomrule
\end{tabular}
\end{center}
These span small retail transactions through moderately sized trades where
routing decisions begin to materially affect execution quality.

Five production DEX aggregation systems were compared: KyberSwap,
Velora (formerly ParaSwap), Odos, OpenOcean, and 8DX.
Each represents a different routing architecture.
The goal is not an exhaustive survey of the aggregator ecosystem but an
evaluation of routing quality within a representative competitive set.

At the time of the experiment, the 8DX routing engine---developed by the
author---integrated liquidity from Uniswap V2 and V3~\cite{adams_uniswap},
PancakeSwap V2 and V3, SushiSwap V2, and multiple Curve pool types (Stable,
Stable NG, Crypto, Crypto TriPool, Crypto NG, and Meta Pools).
The engine uses gas-aware edge scores during graph traversal, a per-pool
price-impact cap during allocation, and an on-chain slippage tolerance
enforced at execution time.
These results should be interpreted in that context.

Quotes were collected by querying the public APIs of each aggregator.
Each measurement epoch sent identical swap requests to all systems within a
short time window, reducing the effect of market drift and ensuring that each
router observed approximately the same pool state.
Approximately 50--60 epochs were collected per trade size.
The primary comparison variable is the returned USDT output amount.

\section{Empirical Metrics}

Let $Q_{r,t}(x)$ denote the quote returned by router $r$ at epoch $t$ for
input size $x$, and let
\[
Q_t^*(x)=\max_{r} Q_{r,t}(x)
\]
be the best observed quote in that epoch.

We evaluate routing quality using three complementary metrics.

The \emph{rank} of a router in epoch $t$ is its position when all quotes are
sorted in descending order.
The rank distribution captures relative positioning within the competitive set
but not the magnitude of differences.

The \emph{tracking ratio} is
\[
T_{r,t}(x)=\frac{Q_{r,t}(x)}{Q_t^*(x)}.
\]
A value close to one means the router stays near the best observed quote even
when it does not rank first.

The \emph{shortfall in basis points} is
\[
S_{r,t}(x)
=
10{,}000
\left(1-\frac{Q_{r,t}(x)}{Q_t^*(x)}\right).
\]
This quantifies the relative loss against the best observed quote in units
standard in trading.

All three metrics are defined at the \emph{quote level}: they compare routers
against each other within a synchronized epoch, not against a
pool-state-reconstructed optimum such as the SCO or FVO
benchmarks of~\cite{xi2025suboptimality}.
A router that scores well here is competitive among production aggregators at
quote time; global on-chain optimality is a separate question.

\section{Empirical Results}

\subsection{Rank Distribution}

Figure~\ref{fig:rank_profile} reports the rank distribution of 8DX within the
evaluated router set across trade sizes.

\begin{figure}[htbp]
\centering
\includegraphics[width=\linewidth]{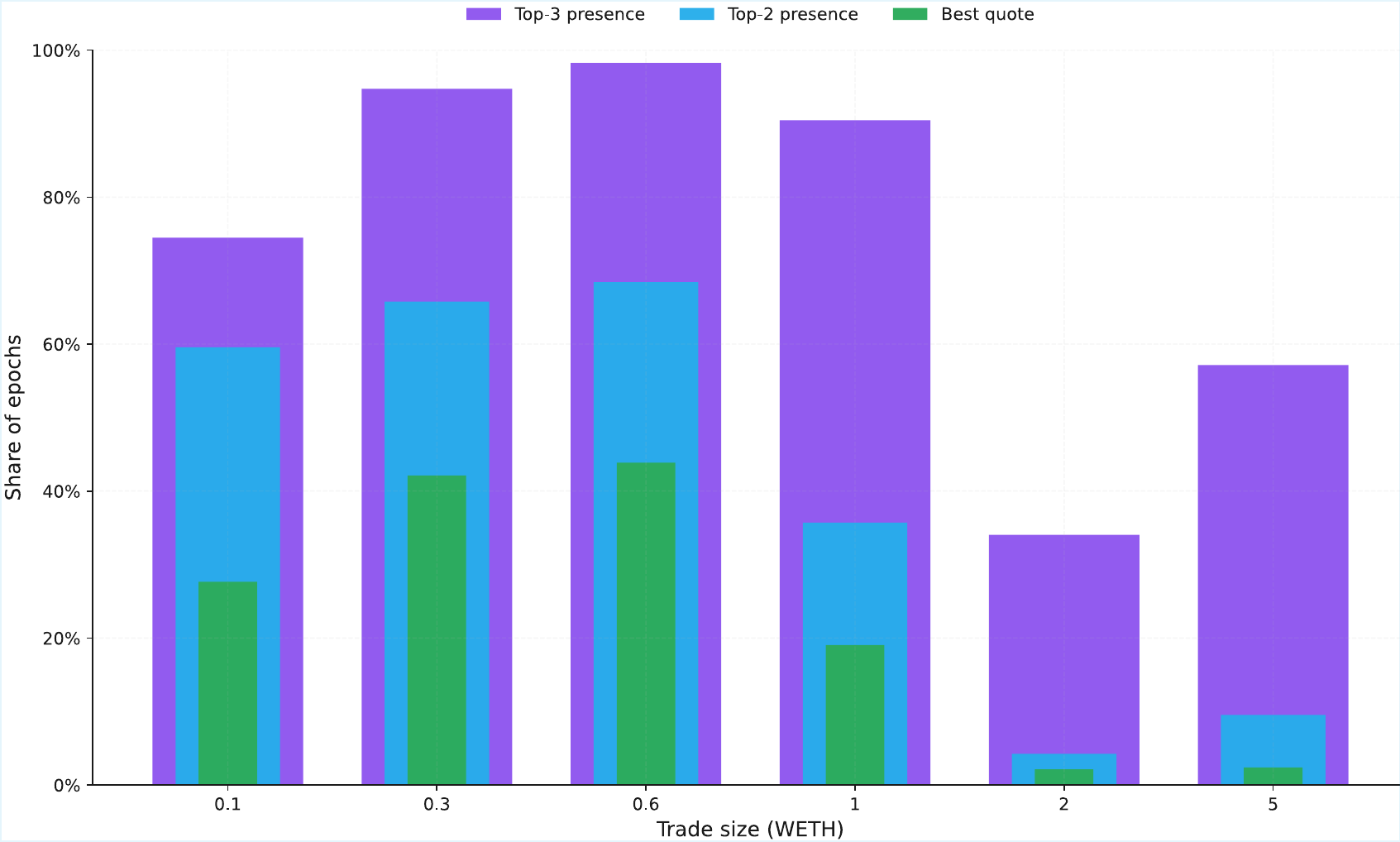}
\caption{Frequency with which 8DX appears among the top-ranked quotes across
  trade sizes. Purple: top-3 presence; cyan: top-2 presence; green: best
  quote.}
\label{fig:rank_profile}
\end{figure}

8DX appeared in the top-3 quotes in at least 57\% of epochs across all tested
trade sizes, rising to 98\% at 0.6~WETH.
Top-2 presence reached 69\% at 0.6~WETH and remained above 35\% for trades up
to 1~WETH.
8DX returned the best observed quote in 43--44\% of epochs at 0.3 and
0.6~WETH, and in roughly 28\% of epochs at 0.1~WETH.
Performance was weaker at 2 and 5~WETH, where top-3 presence fell to 34\% and
57\% respectively and best-quote frequency dropped below 5\%.
This degradation at larger sizes is consistent with the liquidity coverage at
the time of measurement: the routing engine integrated pools up to a moderate
depth, so larger trades increasingly favored aggregators with access to deeper
concentrated-liquidity venues.

\subsection{Market Tracking}

Figure~\ref{fig:tracking_ratio} shows the distribution of the tracking ratio
$T_{r,t}(x)$ across epochs for each trade size.

\begin{figure}[htbp]
\centering
\includegraphics[width=\linewidth]{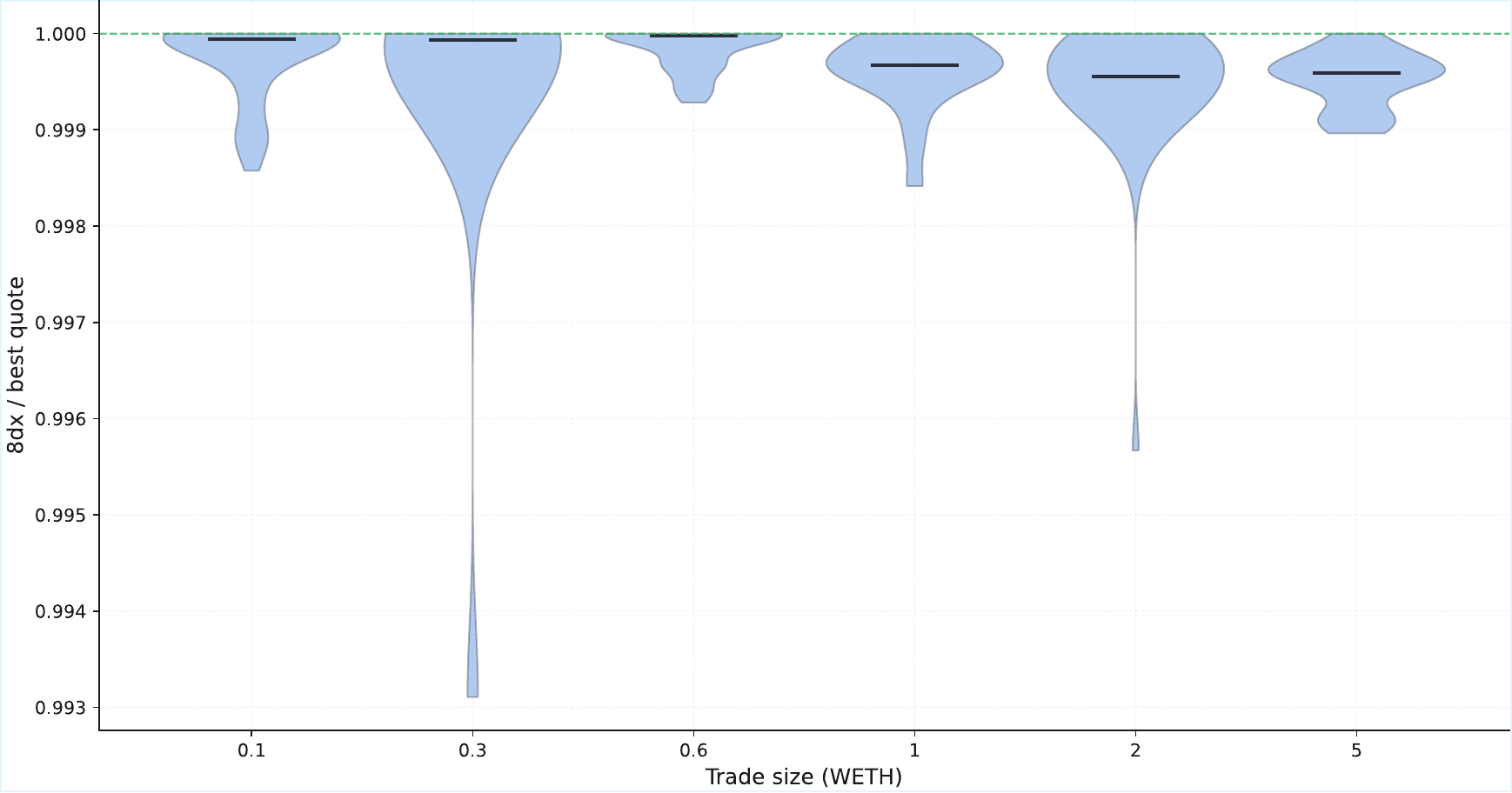}
\caption{Distribution of the ratio between the 8DX quote and the best quote
  observed in each epoch. The dashed line marks ratio~$= 1$.}
\label{fig:tracking_ratio}
\end{figure}

The median tracking ratio exceeded 0.9997 at all trade sizes from 0.1 to
1~WETH, and remained above 0.9994 at 2 and 5~WETH.
The distributions are tightly concentrated: at 0.6~WETH virtually all
observations fall within 2~bps of the best quote.
The most notable tail occurs at 0.3~WETH, where a single episode of missing
liquidity pulled the ratio down to approximately 0.9931; the 0.3~WETH body
is otherwise the tightest of all size bins.
At 2~WETH the lower tail extends to roughly 0.9958, reflecting occasional
episodes where deeper-liquidity competitors found materially better routes.
Overall, when 8DX does not return the best quote, the gap is typically below
3~bps at sizes up to 1~WETH and widens to 5--40~bps in isolated episodes at
larger sizes.

\subsection{Stability Across Trade Sizes}

Figure~\ref{fig:quantile_fan} shows shortfall quantile bands as a function of
trade size.

\begin{figure}[htbp]
\centering
\includegraphics[width=\linewidth]{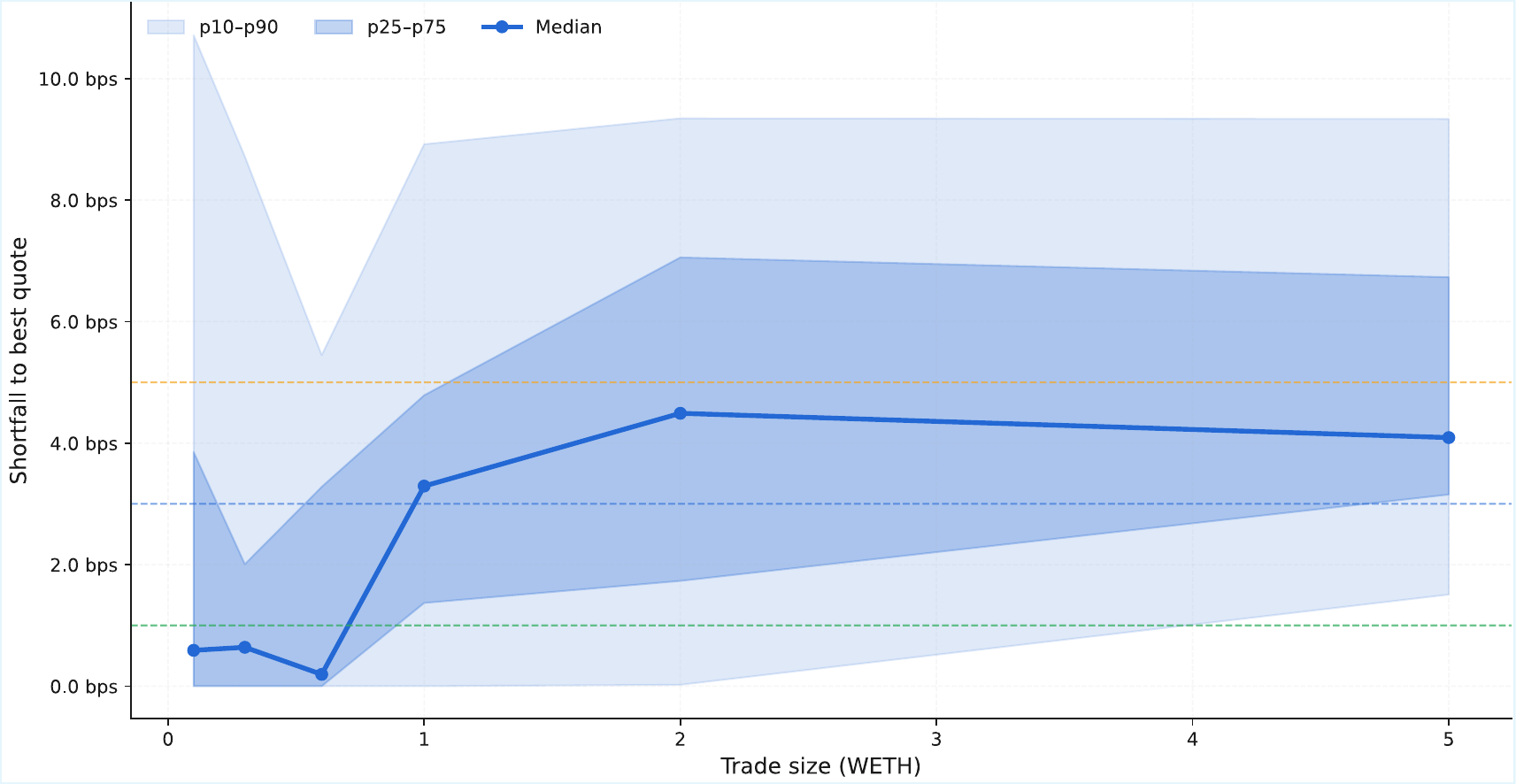}
\caption{Quantile bands of shortfall in basis points relative to the best
  observed quote. Bands show p10--p90 (light) and p25--p75 (medium); the
  solid line is the median.}
\label{fig:quantile_fan}
\end{figure}

The median shortfall was below 1~bps at 0.1 and 0.3~WETH, rose to
approximately 3.3~bps at 1~WETH, peaked at 4.6~bps at 2~WETH, and settled
near 4.1~bps at 5~WETH.
The interquartile band (p25--p75) remained below 7~bps across all sizes,
and the p90 boundary stayed under 10~bps.
The leveling of the median between 2 and 5~WETH suggests that at large trade
sizes the competitive gap stabilizes: all routers face the same thin on-chain
liquidity, so the margin over the best quote does not widen further with size.

\subsection{Shortfall Distribution}

Figure~\ref{fig:shortfall_distribution} shows the per-size-bin shortfall
distribution as boxplots.

\begin{figure}[htbp]
\centering
\includegraphics[width=\linewidth]{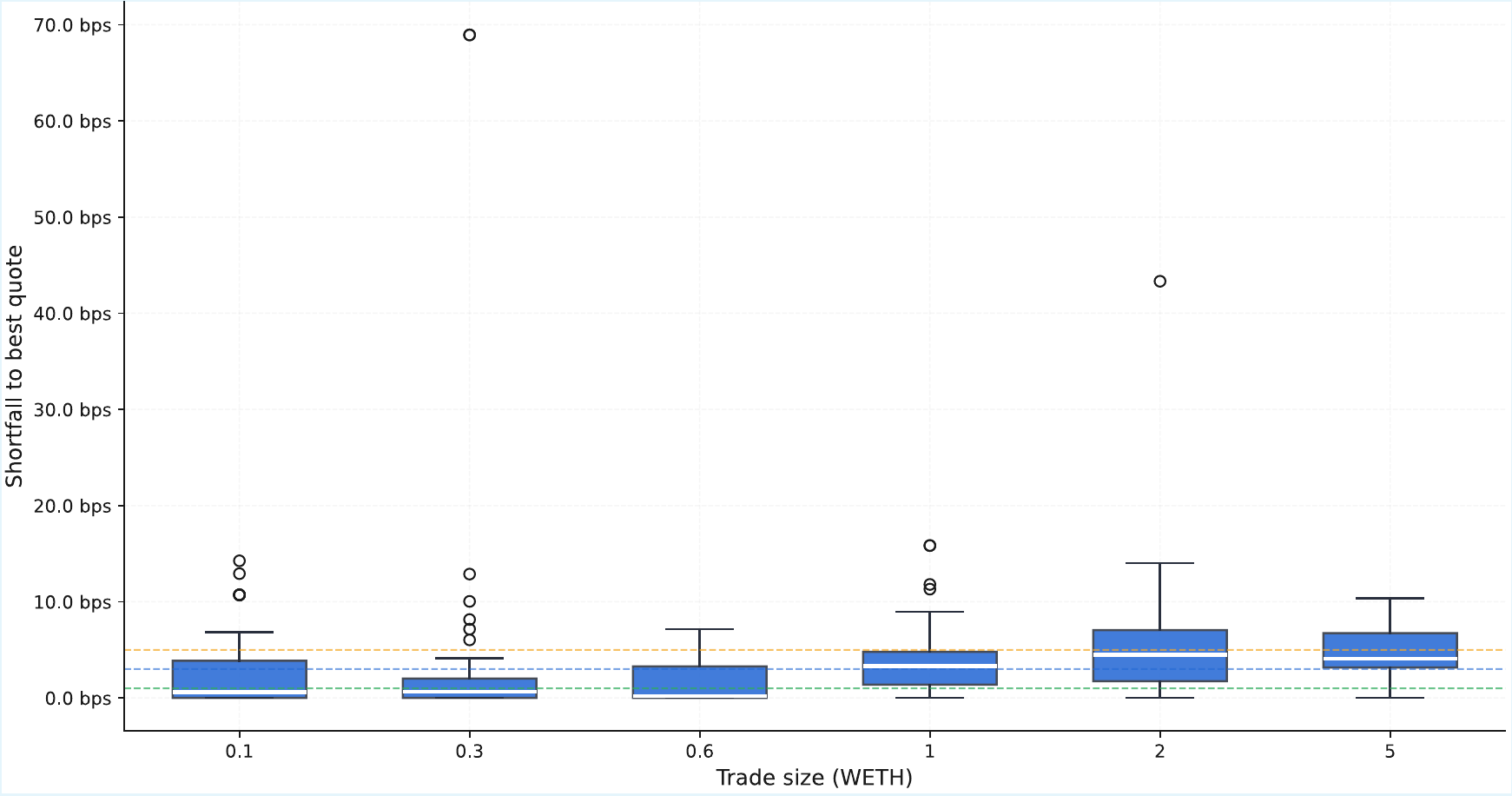}
\caption{Distribution of quote shortfall in basis points relative to the best
  observed quote. Boxes show the interquartile range; whiskers extend to
  $1.5\times\mathrm{IQR}$; circles are individual outliers.}
\label{fig:shortfall_distribution}
\end{figure}

The box bodies are compact across all sizes: the IQR spans roughly 0--6~bps
at each trade size, and the median rises from approximately 2~bps at 0.1~WETH
to around 5~bps at 5~WETH.
The most extreme outlier is 69~bps at 0.3~WETH, corresponding to the same
isolated episode visible in the tracking-ratio tail in
Figure~\ref{fig:tracking_ratio}.
At 2~WETH a single epoch reached 43~bps; all other observations at that size
fall within 12~bps.
The absence of systematic whisker growth from 0.6 to 5~WETH is consistent
with the quantile-band picture: the distribution widens between 0.6 and
2~WETH but does not continue to widen at 5~WETH, reflecting convergence of
competitive quotes in the thin-liquidity regime.

\section{Discussion}

Line-graph routing shows that a graph transformation can improve path discovery
relative to DFS enumeration~\cite{zhang2025linegraph}, and subsequent
extensions demonstrate that BFS traversal, route splitting, and
aggregator-level construction are important for production
applicability~\cite{zhang2025extensions}.
Our framework is complementary: we use the token graph to generate
candidate paths and then solve a continuous allocation problem over them.
Route splitting in prior work is particularly close to the convex allocation
layer used here --- both are driven by the concavity of AMM output functions ---
but the formulations differ: route splitting is discrete and sequential, while
the allocation model here is continuous and characterized by KKT marginal
equalization.

The empirical results should be distinguished from execution-level optimality
audits~\cite{xi2025suboptimality}, which reconstruct on-chain routes and
compute pool-state-based benchmarks.
Our benchmark evaluates quote-level competitiveness: users and integrators
observe quotes before execution, and a router that consistently tracks the
best observed quote is practically useful regardless of whether it achieves
global on-chain optimality.
The 8DX results confirm that competitive quote-level performance is achievable
within a moderate liquidity universe: median shortfall below 5~bps across all
sizes, top-3 presence in the majority of epochs, and no systematic degradation
of the distribution between 2 and 5~WETH.

\section{Limitations}\label{sec:limitations}

The benchmark rests on quoted rather than executed amounts, so realized
shortfalls may differ from what we measure---latency, MEV, and state changes
between quote and execution all intervene, even with on-chain slippage
tolerance enforced at the executor.
The dataset covers one token pair and one network; other pairs, fee structures,
and chains may behave differently.
We also observe only externally visible quote outputs and cannot inspect the
internal search logic of any competing aggregator.
At 50--60 epochs per trade size the sample is sufficient for the comparisons
we draw but too small for tail-risk analysis or time-series decomposition.

On the theory side, the path-separable model ignores cross-path coupling:
when two candidate paths share a pool, the allocation to one affects the
effective exchange rate of the other, and the separable objective does not
capture this.
The gas term $\gamma g_a$ in the edge score is a linearization of execution
cost; it does not model the discrete activation structure that arises when
gas is treated as a fixed per-path cost, which would require a mixed-integer
formulation~\cite{angeris_cfmm,guisewite1990}.

\section{Conclusion}

We built and evaluated a multi-path routing system for AMM exchange networks
grounded in convex optimization.
The token multigraph formulation with pool-simple path enforcement gives a
clean separation between graph search and continuous allocation.
Gas cost is embedded directly in the edge score $w_a = -\log\pi_a + \gamma g_a$,
so expensive arcs are suppressed at enumeration time.
A per-pool price-impact cap bounds slippage during allocation, and an on-chain
slippage tolerance enforced at the smart-contract executor provides a final
execution-time guarantee.
The improving-path certificate uses the KKT multiplier from the allocation to
run a single shortest-path query over initial marginal rates, confirming
candidate-set sufficiency in the majority of epochs with $k = 20$ paths.

On Ethereum mainnet, the implemented engine achieved median shortfall below
5~bps across six trade sizes and top-3 rank presence above 57\% in every
size bin.
Performance degraded at 2 and 5~WETH, where aggregators with deeper
concentrated-liquidity coverage had a structural advantage.

The results suggest that integrating gas cost into the graph search, bounding
slippage at both the allocation and execution layers, and using a KKT-based
termination certificate together produce a routing engine that is already
competitive in practice---and that the gaps which remain are driven by
liquidity coverage rather than routing logic.

\end{document}